\newcommand\str{\rightarrow}
\newcommand\mj{\mbox{\rm id}}
\newcommand\set{\mbox{\emph{Set}}}
\newcommand\cob{\textbf{\emph{2Cob}}}
\newcommand\vek{\textbf{\emph{Vect}}}
 \newtheorem{thm}{Theorem}[section]
 \newtheorem{prop}[thm]{Proposition}
 \newtheorem{lem}[thm]{Lemma}
\numberwithin{equation}{section}
\title{A faithful 2-dimensional TQFT}
\author{Stevan Gajovi\' c, Zoran Petri\' c and Sonja Telebakovi\' c Oni\' c}
\address{ \scriptsize{University of Groningen, Bernoulli Institute\\
Nijenborgh 9\\ 9747 AG, Groningen, the Netherlands}}
\email{s.gajovic@rug.nl}
\address{ \scriptsize{Mathematical Institute SANU\\ Knez Mihailova 36, p.f.\ 367\\
        11001 Belgrade, Serbia}}
\email{zpetric@mi.sanu.ac.rs}
\address{\scriptsize{University of Belgrade, Faculty of Mathematics\\ Studentski trg 16\\ 11000 Belgrade, Serbia}}
\email{sonjat@matf.bg.ac.rs}
\date{}
\begin{document}

\begin{abstract}
It has been shown in this paper that the commutative Frobenius algebra
$\mathbb{QZ}_5\otimes Z(\mathbb{QS}_3)$ provides a complete
invariant for two-dimensional cobordisms, i.e.,\ that the corresponding
two-dimensional quantum field theory is faithful. Zsigmondy's Theorem is essential to the proof of this result.

\vspace{.3cm}

\noindent {\small {\it Mathematics Subject Classification} ({\it
        2000}): 57R56, 18A22, 18D10, 15A69}

\vspace{.5ex}

\noindent {\small {\it Keywords$\,$}: Frobenius algebra,
topological quantum field theory, faithful functor, Zsigmondy's
Theorem}
\end{abstract}

\maketitle

\section{Introduction}

It is evident that one aspect of topological quantum field theories (TQFTs) deals with the corresponding invariants of manifolds. However, the completeness of these invariants is seldom investigated in the literature. The existence of a faithful $n$-dimensional TQFT provides a complete algebraic invariant for $n$-cobordisms, which may help in their classification.

In the current article we show that there is a faithful 2-dimensional TQFT. As the classification of 2-cobordisms is well-known, this result has a stronger algebraic than topological impact. It shows that there is a commutative Frobenius algebra, which satisfies only the equalities in the language of multiplication, unit, comultiplication and counit, which hold in every commutative Frobenius object.

Since this structure is free of additional equations, one could be tempted to call it ``free commutative Frobenius algebra''. However, since the category of commutative Frobenius algebras is a groupoid---every homomorphism is an isomorphism (cf.\ \cite[Lemma~2.4.5]{K03}---there are no freely generated objects in this category.

Let $\mathcal{F}$ be the category whose objects are symmetric monoidal categories with one distinguished commutative Frobenius object and whose arrows are symmetric monoidal functors preserving distinguished objects and their Frobenius structures. The forgetful functor from $\mathcal{F}$ to the category $\set$ of sets and functions, which maps a symmetric monoidal category to the set of its objects, has a left adjoint, the ``free'' functor $F$. Let us denote the category $F\emptyset$ by $\mathbf{K}$. This category may be constructed such that its set of objects is the set of finite ordinals and that 1 is the universal commutative Frobenius object, in the same sense as the object 1 of the simplicial category $\Delta$ is the universal monoid. (A detailed construction of $\mathbf{K}$ is given in \cite[Appendix~9.3]{BPT}.)

The standard presentation of the category of 2-cobordisms by generators and relations (cf.\ \cite[Section~1.4]{K03}) may be treated as a completeness result (for a logician) or a coherence result (for a category theorist) of the syntax represented by the category $\mathbf{K}$ with respect to the semantics represented by the category of 2-cobordisms. The existence of a commutative Frobenius algebra, which is free of additional equations in the canonical language, provides a new completeness result with the universal object 1 interpreted not as a topological object, but as an algebraic object.

Our result shows that the category of vector spaces contains an isomorphic copy of the category $\mathbf{K}$. Without proving the existence of a faithful 2-dimensional TQFT, one cannot be sure that commutative Frobenius objects in the category of vector spaces do not satisfy some additional equations, and, hence, deserve some attributes attached to the standard name ``commutative Frobenius algebra''. The algebra constructed in this paper justifies this standard name.

There is a result in \cite[Section~14]{DP12} that claims faithfulness of a 1-dimensional TQFT, which is inspired by \cite{B37}. The third
author, in her recent work \cite{T17}, has shown that every
1-dimensional TQFT, over a field of characteristic zero, is faithful. This means that every such 1-dimensional TQFT provides a complete invariant for 1-cobordisms. On the other hand, we do not know whether there exists a faithful $n$-dimensional TQFT for $n\geq 3$. An important step towards a solution of this problem was given in \cite{J14}, where the author presents the cobordism category in arbitrary dimension $n$ with generators and relations. Our proof for 2-dimensional case suggests that for $n\geq 3$, particular difficulties could be caused by closed manifolds with many connected components. As it was shown in \cite{F13}, neither Turaev-Viro, \cite{TV92}, nor Reshetikhin-Turaev, \cite{T94}, 3-dimensional TQFTs are faithful. We are aware of the fact that even a negative answer to this question might be conclusive---it suggests that TQFTs should find more appropriate targets than the category of vector spaces. Such an approach is provided by extended topological field theories (cf.\ \cite{L09}).

In order to keep this paper as short as possible, we rely on
\cite{K03} for basic definitions, as well as on now classical works \cite{A88}, \cite{Q95}, \cite{A96}, and more recent \cite{L09}.

\section{The category \cob\ and 2TQFTs}

Let \cob\ be the category whose objects are
$\mathbf{0},\mathbf{1},\mathbf{2},\ldots$, where $\mathbf{n}$ is
the sequence of $n$ circles and whose arrows are the equivalence
classes of 2-cobordisms defined as in \cite[Section~1.2]{K03}. We
denote cobordisms by $K,L,\ldots$, and $K=L$ means that $K$ and
$L$ belong to the same equivalence class. In the illustrations below, cobordisms are oriented \emph{top to bottom} (the ingoing boundary is at the top and the outgoing boundary is at the bottom of the picture), not \emph{left to right} as usual.

Let $K\colon\mathbf{n}\str \mathbf{m}$ be a 2-cobordism whose ingoing
and outgoing boundaries are respectively the sequences of circles
$(\Sigma^0_0,\ldots,\Sigma^{n-1}_0)$ and
$(\Sigma^0_1,\ldots,\Sigma^{m-1}_1)$. We define an equivalence
relation $\rho_K$ on the set
\[
(\{0,\ldots,n-1\}\times\{0\})\cup(\{0,\ldots,m-1\}\times\{1\})
\]
such that $(i,k)\rho_K(j,l)$ when $\Sigma^i_k$ and $\Sigma^j_l$
belong to the same connected component in $K$ (cf.\
\cite[Section~8]{BPT}). For example, if $K\colon \mathbf{3}\str
\mathbf{4}$ is
\begin{center}
\scalebox{.8} 
{
\begin{pspicture}(12.5,4.8)(0,.2)
\psellipse[linewidth=0.02,dimen=outer](4,4)(1,.4)
\psellipse[linewidth=0.02,dimen=outer](7,4)(1,.4)
\psellipse[linewidth=0.02,dimen=outer](10,4)(1,.4)
\psellipse[linewidth=0.02,dimen=outer](3,1)(1,.4)
\psellipse[linewidth=0.02,dimen=outer](6,1)(1,.4)
\psellipse[linewidth=0.02,dimen=outer](9,1)(1,.4)
\psellipse[linewidth=0.02,dimen=outer](12,1)(1,.4)

\psellipse[linewidth=0.02,dimen=outer](0.75,2.52)(1,1.3)
\pscurve[linewidth=0.02,dimen=outer](.35,2.2)(.75,2.2)(1.05,2.52)(1.07,2.92)
\pscurve[linewidth=0.02,dimen=outer](.5,2.3)(.5,2.52)(.75,2.77)(.97,2.77)

\rput(4,4){0} \rput(7,4){1} \rput(10,4){$2$} \rput(3,1){0}
\rput(6,1){1} \rput(9,1){$2$} \rput(12,1){$3$}

\pscurve[linewidth=0.02,dimen=outer](3,4)(3.1,2.7)(2,1.2)(2,1)
\pscurve[linewidth=0.02,dimen=outer](11,4)(11,3.8)(10.2,2.7)(10,1)
\pscurve[linewidth=0.02,dimen=outer](5,4)(7,3.2)(9,4)
\pscurve[linewidth=0.02,dimen=outer](4,1)(6,2)(8,1)
\pscurve[linewidth=0.02,dimen=outer](6,4)(6,3.8)(5.9,3.5)
\pscurve[linewidth=0.02,dimen=outer](5,1)(5,1.2)(5.4,1.85)
\pscurve[linewidth=0.02,dimen=outer](7,1)(7.1,1.3)(7.45,1.4)
\pscurve[linewidth=0.02,dimen=outer](10,1.4)(10.6,1.3)(11,1)
\pscurve[linewidth=0.02,dimen=outer](8,4)(8,3.8)(8.35,3.65)
\pscurve[linewidth=0.02,dimen=outer](10.25,2.85)(11.5,2.4)(12.5,2)(13,1.5)(13,1)

\pscurve[linewidth=0.02,dimen=outer](4,2.7)(4.5,2.4)(5,2.4)(5.5,2.7)
\pscurve[linewidth=0.02,dimen=outer](4.3,2.6)(4.6,2.8)(4.9,2.8)(5.2,2.6)
\pscurve[linewidth=0.02,dimen=outer](8,2.7)(8.5,2.4)(9,2.4)(9.5,2.7)
\pscurve[linewidth=0.02,dimen=outer](8.3,2.6)(8.6,2.8)(8.9,2.8)(9.2,2.6)

\end{pspicture}
}
\end{center}
then the equivalence classes of $\rho_K$ are
\[
\{(0,0),(2,0),(0,1),(2,1)\}\quad{\rm and}\quad
\{(1,0),(1,1),(3,1)\}.
\]
Also, we denote by $(g^i_k)_K$ the genus of the connected
component of $K$ containing~$\Sigma^i_k$.

The category \cob\ is a symmetric monoidal with the \emph{tensor
product} $\otimes$ given by ``putting side by side'' and symmetry
generated by the transpositions:
\begin{center}
\scalebox{.8} 
{
\begin{pspicture}(3,4.5)(0,.5)
\psellipse[linewidth=0.02,dimen=outer](0,4)(1,.4)
\psellipse[linewidth=0.02,dimen=outer](3,4)(1,.4)
\psellipse[linewidth=0.02,dimen=outer](0,1)(1,.4)
\psellipse[linewidth=0.02,dimen=outer](3,1)(1,.4)

\rput(0,4){0} \rput(3,4){1} \rput(0,1){0}
\rput(3,1){1}

\pscurve[linewidth=0.02,dimen=outer](-1,4)(-1,3.5)(-.5,3)(.5,2.5)(1.5,2)(2,1.5) (2,1)
\pscurve[linewidth=0.02,dimen=outer](1,4)(1,3.5)(1.5,3)(2.5,2.5)(3.5,2)(4,1.5) (4,1)
\pscurve[linewidth=0.02,dimen=outer](2,4)(2,3.5)(1.5,3)
\pscurve[linewidth=0.02,dimen=outer](.5,2.5)(-.5,2)(-1,1.5)(-1,1)
\pscurve[linewidth=0.02,dimen=outer](4,4)(4,3.5)(3.5,3)(2.5,2.5)
\pscurve[linewidth=0.02,dimen=outer](1.5,2)(1,1.5)(1,1)

\end{pspicture}
}
\end{center}

Let \vek\ be the category of vector spaces over a fixed field
whose symmetric monoidal structure is given by the tensor product and
the usual symmetry. According to Atiyah's axioms (see
\cite[Section~2]{A88}), a 2-\emph{dimensional quantum field
theory} (2TQFT) is a symmetric, strong monoidal functor (cf.\
\cite[Section XI.2]{ML71}) from \cob\ to \vek.

For $m,k,n\geq 0$, let $E_{m,k,n}$ denote the connected
2-cobordism with $n$ ingoing boundaries, $m$ outgoing boundaries
and genus $k$.
\begin{center}
\scalebox{.8} 
{
\begin{pspicture}(8,4.8)(0,.2)
\psellipse[linewidth=0.02,dimen=outer](0,4)(1,.4)
\psellipse[linewidth=0.02,dimen=outer](3,4)(1,.4)
\psellipse[linewidth=0.02,dimen=outer](8,4)(1,.4)
\psellipse[linewidth=0.02,dimen=outer](-.5,1)(1,.4)
\psellipse[linewidth=0.02,dimen=outer](2.5,1)(1,.4)
\psellipse[linewidth=0.02,dimen=outer](8.5,1)(1,.4)

\rput(0,4){0} \rput(3,4){1} \rput(8,4){$n\!-\!1$} \rput(-.5,1){0}
\rput(2.5,1){1} \rput(8.5,1){$m\!-\!1$} \rput(5.5,3.7){$\cdots$}
\rput(5.5,1.3){$\cdots$} \rput(5.5,2.5){$\cdots$}
\rput(0.75,2.6){0} \rput(3.25,2.6){1} \rput(7.25,2.6){$k\!-\!1$}

\pscurve[linewidth=0.02,dimen=outer](-1,4)(-.8,2.8)(-1.5,1)
\pscurve[linewidth=0.02,dimen=outer](9,4)(8.8,2.8)(9.5,1)
\pscurve[linewidth=0.02,dimen=outer](1,4)(1.5,3.85)(2,4)
\pscurve[linewidth=0.02,dimen=outer](.5,1)(1,1.15)(1.5,1)
\pscurve[linewidth=0.02,dimen=outer](4,4)(4.6,3.75)(5.2,3.7)
\pscurve[linewidth=0.02,dimen=outer](7,4)(6.4,3.75)(5.8,3.7)
\pscurve[linewidth=0.02,dimen=outer](3.5,1)(4.6,1.25)(5.2,1.3)
\pscurve[linewidth=0.02,dimen=outer](7.5,1)(6.4,1.25)(5.8,1.3)

\pscurve[linewidth=0.02,dimen=outer](0,2.7)(.5,2.4)(1,2.4)(1.5,2.7)
\pscurve[linewidth=0.02,dimen=outer](0.3,2.6)(.6,2.8)(.9,2.8)(1.2,2.6)
\pscurve[linewidth=0.02,dimen=outer](2.5,2.7)(3,2.4)(3.5,2.4)(4,2.7)
\pscurve[linewidth=0.02,dimen=outer](2.8,2.6)(3.1,2.8)(3.4,2.8)(3.7,2.6)
\pscurve[linewidth=0.02,dimen=outer](6.5,2.7)(7,2.4)(7.5,2.4)(8,2.7)
\pscurve[linewidth=0.02,dimen=outer](6.8,2.65)(7.1,2.85)(7.4,2.85)(7.7,2.6)

\end{pspicture}
}
\end{center}

As a part of a relation between 2TQFTs and commutative Frobenius algebras, which is thoroughly explained in \cite[Section~3.3]{K03}, we have that if $F$ is a 2TQFT, then for
\[
\mu=F(E_{1,0,2}),\quad \eta=F(E_{1,0,0}),\quad\delta=F(E_{2,0,1}),\quad {\rm and}\quad \varepsilon=F(E_{0,0,1}),
\]
$(F\mathbf{1},\mu,\eta,\delta,\varepsilon)$, is a commutative Frobenius algebra. Conversely, if $(A,\mu,\eta,\delta,\varepsilon)$ is a commutative Frobenius algebra, then there is a 2TQFT, which we denote by $F_A$, mapping $\mathbf{1}$ into $A$, and $E_{1,0,2}$, $E_{1,0,0}$, $E_{2,0,1}$ and $E_{0,0,1}$ into $\mu$, $\eta$, $\delta$ and $\varepsilon$, respectively. For such an $F_A$, we denote $F_AK$ by $(K)_A$, and abbreviate $F_AK=F_AL$ by $K=_A L$.

The following three lemmata hold since 2TQFT is a monoidal functor.

\begin{lem}[FILLING HOLES]\label{FH}
If $K=_A L$ for $K,L\colon \mathbf{n}\str \mathbf{m}$, then for every $0\leq i\leq n-1$ and $0\leq j\leq m-1$, we have
\[
K\circ(\mj_i\otimes E_{1,0,0}\otimes\mj_{n-i-1})=_A L\circ(\mj_i\otimes E_{1,0,0}\otimes\mj_{n-i-1})
\]
and
\[
(\mj_j\otimes E_{0,0,1}\otimes\mj_{m-j-1})\circ K=_A (\mj_j\otimes
E_{0,0,1}\otimes\mj_{m-j-1})\circ L.
\]
\end{lem}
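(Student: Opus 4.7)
The plan is to unfold the abbreviation $K=_A L$, which by definition means $F_AK=F_AL$, and then apply functoriality and strong monoidality of the 2TQFT $F_A$ to factor both sides of each claimed equation.

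For the first equation, functoriality of $F_A$ gives
\[
F_A\bigl(K\circ(\mj_i\otimes E_{1,0,0}\otimes\mj_{n-i-1})\bigr)=F_A(K)\circ F_A\bigl(\mj_i\otimes E_{1,0,0}\otimes\mj_{n-i-1}\bigr),
\]
and, up to the coherence isomorphisms of the strong monoidal structure,
\[
F_A\bigl(\mj_i\otimes E_{1,0,0}\otimes\mj_{n-i-1}\bigr)=\mj\otimes\eta\otimes\mj,
\]
where $\eta=F_A(E_{1,0,0})$ and the two identities act on $A^{\otimes i}$ and $A^{\otimes(n-i-1)}$, with $A=F_A\mathbf{1}$. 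An analogous decomposition holds with $L$ in place of $K$. Since $F_AK=F_AL$, the two resulting expressions coincide in \vek, which is precisely what the first displayed equation asserts under $F_A$.

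The second equation is handled in exactly the same way, with $\varepsilon=F_A(E_{0,0,1})$ in place of $\eta$ and with the roles of pre- and postcomposition swapped throughout. No genuine obstacle appears: as the authors themselves note, the lemma holds simply because $F_A$ is a symmetric strong monoidal functor, and the only care required is tracking the strong-monoidal coherence isomorphisms that identify $F_A(X\otimes Y)$ with $F_A(X)\otimes F_A(Y)$ on the finitely many tensor factors appearing. Essentially, the lemma is a direct translation of the equations $\mu_K=\mu_L$, where $\mu_K$ denotes the middle term of a three-fold tensor product used to attach $\eta$ or $\varepsilon$ at the chosen position.
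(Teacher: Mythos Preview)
Your proposal is correct and follows the paper's own justification, which consists solely of the remark that the lemma holds because a 2TQFT is a monoidal functor. One small simplification: for this particular lemma you do not need the strong monoidal structure at all---since the morphism $M=\mj_i\otimes E_{1,0,0}\otimes\mj_{n-i-1}$ (respectively $\mj_j\otimes E_{0,0,1}\otimes\mj_{m-j-1}$) is the \emph{same} on both sides, plain functoriality $F_A(K\circ M)=F_A(K)\circ F_A(M)=F_A(L)\circ F_A(M)=F_A(L\circ M)$ already gives the conclusion, with no need to decompose $F_A(M)$ via coherence isomorphisms. (Monoidality is genuinely needed for the two subsequent lemmata, where $K$ appears inside a tensor product.) Your closing sentence about ``$\mu_K$'' is unclear and best dropped.
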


\begin{center}
\scalebox{.8} 
{
\begin{pspicture}(13,3.8)(0,-0.4)
\rput(6.5,-0.2){\small Illustration of the first implication of Lemma~\ref{FH}}
\psellipse[linewidth=0.02,dimen=outer](0,3.2)(.5,.2)
\psellipse[linewidth=0.02,dimen=outer](2,3.2)(.5,.2)
\psellipse[linewidth=0.02,dimen=outer](1,.8)(.5,.2)

\psarc[linewidth=0.02,dimen=outer](2,3.2){.5}{180}{360}

\pscurve[linewidth=0.02,dimen=outer](-.2,2.7)(-.05,2.5)(.15,2.5)(.3,2.7)
\pscurve[linewidth=0.02,dimen=outer](-.12,2.64)(-.05,2.7)(.15,2.7)(.22,2.64)

\pscurve[linewidth=0.02,dimen=outer](-.5,3.2)(-.4,2.5)(0,2)(.4,1.5)(.5,.8)
\pscurve[linewidth=0.02,dimen=outer](.5,3.2)(.6,2.5)(1,2)(1.4,1.5)(1.5,.8)

\rput(3,2){\huge $=_A$}

\psellipse[linewidth=0.02,dimen=outer](4,3.2)(.5,.2)
\psellipse[linewidth=0.02,dimen=outer](6,3.2)(.5,.2)
\psellipse[linewidth=0.02,dimen=outer](5,.8)(.5,.2)

\psarc[linewidth=0.02,dimen=outer](5,3.2){.5}{180}{360}
\psarc[linewidth=0.02,dimen=outer](5,3.2){1.5}{180}{360}
\psarc[linewidth=0.02,dimen=outer](5,.8){.5}{0}{180}

\rput(7.5,2){\huge $\Rightarrow$}

\psellipse[linewidth=0.02,dimen=outer](9,3.2)(.5,.2)
\psellipse[linewidth=0.02,dimen=outer](9,.8)(.5,.2)
\psellipse[linewidth=0.02,dimen=outer](10.5,2)(.5,.2)

\psline[linewidth=0.02](8.5,3.2)(8.5,.8)
\psline[linewidth=0.02](9.5,3.2)(9.5,.8)

\pscircle[linewidth=0.02,dimen=outer](10.5,2){.5}

\pscurve[linewidth=0.02,dimen=outer](8.8,2.7)(8.95,2.5)(9.15,2.5)(9.3,2.7)
\pscurve[linewidth=0.02,dimen=outer](8.88,2.64)(8.95,2.7)(9.15,2.7)(9.22,2.64)

\rput(12,2){\huge $=_A$}

\psellipse[linewidth=0.02,dimen=outer](13,3.2)(.5,.2)
\psellipse[linewidth=0.02,dimen=outer](13,.8)(.5,.2)

\psarc[linewidth=0.02,dimen=outer](13,3.2){.5}{180}{360}
\psarc[linewidth=0.02,dimen=outer](13,.8){.5}{0}{180}

\end{pspicture}
}
\end{center}

\begin{lem}[STRETCHING 1]\label{S1}
If $K=_A L$ for $K,L\colon \mathbf{1}\str \mathbf{0}$, then we have
\[
(K\otimes \mj_1)\circ E_{2,0,1}=_A (L\otimes \mj_1)\circ E_{2,0,1},
\]
and if $K=_A L$ for $K,L\colon \mathbf{0}\str \mathbf{1}$, then we have
\[
E_{1,0,2}\circ (K\otimes \mj_1)=_A E_{1,0,2}\circ (L\otimes
\mj_1).
\]
\end{lem}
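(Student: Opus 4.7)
The plan is to use nothing more than the fact that $F_A \colon \cob \to \vek$ is a (strong) symmetric monoidal functor, which is exactly the remark made just before the lemma. The first implication and the second are formally dual (one uses the comultiplication $\delta = F_A(E_{2,0,1})$ on the right, the other the multiplication $\mu = F_A(E_{1,0,2})$ on the left), so I would prove only the first and state that the second follows by the analogous argument with composition read in the opposite order.

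For the first implication, suppose $K, L \colon \mathbf{1} \str \mathbf{0}$ satisfy $F_A K = F_A L$. Tensoring this equation on the right with $\mj_{F_A\mathbf{1}}$ in \vek\ is a well-defined operation on arrows of \vek, so
\[
F_A K \otimes \mj_{F_A\mathbf{1}} = F_A L \otimes \mj_{F_A\mathbf{1}}.
\]
Because $F_A$ is strong monoidal, the left-hand side agrees (up to the coherence isomorphisms identifying $F_A(\mathbf{1}\otimes \mathbf{1})$ with $F_A\mathbf{1}\otimes F_A\mathbf{1}$) with $F_A(K\otimes \mj_1)$, and similarly on the right. Composing both sides with $F_A(E_{2,0,1}) = \delta$ then gives
\[
F_A(K\otimes \mj_1)\circ F_A(E_{2,0,1}) \;=\; F_A(L\otimes \mj_1)\circ F_A(E_{2,0,1}),
\]
and an application of the functoriality of $F_A$ to each side rewrites this as $F_A\bigl((K\otimes \mj_1)\circ E_{2,0,1}\bigr) = F_A\bigl((L\otimes \mj_1)\circ E_{2,0,1}\bigr)$, which is the desired conclusion.

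There is no real obstacle here: the only point requiring a moment's care is the bookkeeping with the monoidal coherence isomorphisms that identify $F_A(X\otimes Y)$ with $F_A X \otimes F_A Y$, but since \cob\ is treated as a strict monoidal category and $F_A$ is strong monoidal, these isomorphisms commute with everything in sight and may safely be suppressed. The second implication is proved by the same argument, tensoring $K = L \colon \mathbf{0}\str\mathbf{1}$ with $\mj_1$ on the right and then pre-composing with $E_{1,0,2}$ on the left. In both cases the content of the lemma is simply that post- or pre-composition with a fixed arrow preserves equality of arrows in the target category.
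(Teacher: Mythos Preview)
Your argument is correct and is exactly the approach the paper takes: the paper does not give a separate proof of this lemma but simply states that it (together with the two neighbouring lemmata) holds because a 2TQFT is a monoidal functor. Your write-up just makes explicit the functoriality and strong-monoidal bookkeeping behind that one-line justification.
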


\begin{center}
\scalebox{.8} 
{
\begin{pspicture}(13,3.4)(0,.0)
\rput(6.5,.2){\small Illustration of the first implication of Lemma~\ref{S1}}
\psellipse[linewidth=0.02,dimen=outer](0,2)(.5,.75)
\psellipse[linewidth=0.02,dimen=outer](1.7,3)(.5,.2)

\psarc[linewidth=0.02,dimen=outer](1.7,2.15){1}{120}{420}

\pscurve[linewidth=0.02,dimen=outer](-.25,2.1)(-.1,1.9)(.1,1.9)(.25,2.1)
\pscurve[linewidth=0.02,dimen=outer](-.17,2.04)(-.1,2.1)(.1,2.1)(.17,2.04)

\pscurve[linewidth=0.02,dimen=outer](1.45,2.1)(1.6,1.9)(1.8,1.9)(1.95,2.1)
\pscurve[linewidth=0.02,dimen=outer](1.53,2.04)(1.6,2.1)(1.8,2.1)(1.87,2.04)

\rput(3.5,2){\huge $=_A$}

\psellipse[linewidth=0.02,dimen=outer](5.2,3)(.5,.2)

\psarc[linewidth=0.02,dimen=outer](5.2,2.15){1}{120}{420}

\pscurve[linewidth=0.02,dimen=outer](4.55,2.1)(4.7,1.9)(4.9,1.9)(5.05,2.1)
\pscurve[linewidth=0.02,dimen=outer](4.63,2.04)(4.7,2.1)(4.9,2.1)(4.97,2.04)

\pscurve[linewidth=0.02,dimen=outer](5.35,2.1)(5.5,1.9)(5.7,1.9)(5.85,2.1)
\pscurve[linewidth=0.02,dimen=outer](5.43,2.04)(5.5,2.1)(5.7,2.1)(5.77,2.04)

\rput(7.5,2){\huge $\Rightarrow$}

\psellipse[linewidth=0.02,dimen=outer](9,2)(.5,.75)
\psellipse[linewidth=0.02,dimen=outer](10.7,3)(.5,.2)
\psellipse[linewidth=0.02,dimen=outer](10.7,1)(.5,.2)

\pscurve[linewidth=0.02,dimen=outer](8.75,2.1)(8.9,1.9)(9.1,1.9)(9.25,2.1)
\pscurve[linewidth=0.02,dimen=outer](8.83,2.04)(8.9,2.1)(9.1,2.1)(9.17,2.04)

\pscurve[linewidth=0.02,dimen=outer](10.45,2.1)(10.6,1.9)(10.8,1.9)(10.95,2.1)
\pscurve[linewidth=0.02,dimen=outer](10.53,2.04)(10.6,2.1)(10.8,2.1)(10.87,2.04)

\psline[linewidth=0.02](10.2,3)(10.2,1)
\psline[linewidth=0.02](11.2,3)(11.2,1)

\rput(12,2){\huge $=_A$}

\psellipse[linewidth=0.02,dimen=outer](13,3)(.5,.2)
\psellipse[linewidth=0.02,dimen=outer](13,1)(.5,.2)

\psline[linewidth=0.02](12.5,3)(12.5,1)
\psline[linewidth=0.02](13.5,3)(13.5,1)

\pscurve[linewidth=0.02,dimen=outer](12.75,2.6)(12.9,2.4)(13.1,2.4)(13.25,2.6)
\pscurve[linewidth=0.02,dimen=outer](12.83,2.54)(12.9,2.6)(13.1,2.6)(13.17,2.54)

\pscurve[linewidth=0.02,dimen=outer](12.75,1.6)(12.9,1.4)(13.1,1.4)(13.25,1.6)
\pscurve[linewidth=0.02,dimen=outer](12.83,1.54)(12.9,1.6)(13.1,1.6)(13.17,1.54)

\end{pspicture}
}
\end{center}

\begin{lem}[STRETCHING 2]\label{S2}
If $K=_A L$ for $K,L\colon \mathbf{2}\str \mathbf{0}$, then we have
\[
(K\otimes \mj_1)\circ (\mj_1\otimes E_{2,0,0})=_A (L\otimes
\mj_1)\circ (\mj_1\otimes E_{2,0,0}),
\]
and if $K=_A L$ for $K,L\colon \mathbf{0}\str \mathbf{2}$, then we have
\[
(\mj_1\otimes E_{0,0,2})\circ (K\otimes \mj_1)=_A (\mj_1\otimes
E_{0,0,2})\circ (L\otimes \mj_1).
\]
\end{lem}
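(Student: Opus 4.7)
The plan is to deduce both implications directly from the fact that $F_A$ is a (strong, symmetric) monoidal functor, so that the relation $=_A$ is automatically a congruence with respect to composition and tensoring. For the first implication I would start from the hypothesis $F_AK=F_AL$ as linear maps $F_A\mathbf{2}\to F_A\mathbf{0}$, and then apply $F_A$ to $(K\otimes\mj_1)\circ(\mj_1\otimes E_{2,0,0})$. Functoriality turns this into a composite, and strong monoidality rewrites each factor as a tensor product involving $\mj_{F_A\mathbf{1}}$, giving
\[
F_A\bigl((K\otimes\mj_1)\circ(\mj_1\otimes E_{2,0,0})\bigr)=\bigl(F_AK\otimes\mj_{F_A\mathbf{1}}\bigr)\circ\bigl(\mj_{F_A\mathbf{1}}\otimes F_AE_{2,0,0}\bigr).
\]
Since the right-hand side sees $K$ only through $F_AK$, replacing $K$ by $L$ leaves it unchanged, and the desired equality follows.

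The second implication is handled dually: starting from $K,L\colon\mathbf{0}\to\mathbf{2}$ with $F_AK=F_AL$, I would apply $F_A$ to $(\mj_1\otimes E_{0,0,2})\circ(K\otimes\mj_1)$, distribute $F_A$ across $\circ$ and $\otimes$ as before, and then invoke the hypothesis to substitute $L$ for $K$ in the resulting expression. The specific shape of the cobordisms $E_{2,0,0}$ and $E_{0,0,2}$ plays no role in the argument; any cobordisms of suitable type would do.

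I do not expect any real obstacle. The contents of Lemmas~\ref{FH} and~\ref{S1}, as well as the present statement, are uniformly just instances of the principle that a symmetric monoidal functor sends equal morphisms to equal morphisms after tensoring and composing with fixed morphisms on either side; this is the force of the author's unifying remark preceding Lemma~\ref{FH}. The only bookkeeping needed is to carry along the unit and associator coherence isomorphisms of the strong monoidal functor $F_A$, and their naturality makes this step transparent.
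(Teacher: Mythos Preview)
Your proposal is correct and is exactly the approach the paper intends: the only justification the paper gives for this lemma is the sentence ``The following three lemmata hold since 2TQFT is a monoidal functor,'' and your argument simply spells out that remark.
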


\begin{center}
\scalebox{.8} 
{
\begin{pspicture}(13,3.4)(0,.0)
\rput(6.5,.2){\small Illustration of the first implication of Lemma~\ref{S2}}
\psellipse[linewidth=0.02,dimen=outer](0,3)(.5,.2)
\psellipse[linewidth=0.02,dimen=outer](2,3)(.5,.2)

\psarc[linewidth=0.02,dimen=outer](1,3){.5}{180}{360}
\psarc[linewidth=0.02,dimen=outer](1,3){1.5}{180}{360}

\pscurve[linewidth=0.02,dimen=outer](.75,2.1)(.9,1.9)(1.1,1.9)(1.25,2.1)
\pscurve[linewidth=0.02,dimen=outer](.83,2.04)(.9,2.1)(1.1,2.1)(1.17,2.04)

\rput(3.5,2){\huge $=_A$}

\psellipse[linewidth=0.02,dimen=outer](4.5,3)(.5,.2)
\psellipse[linewidth=0.02,dimen=outer](6,3)(.5,.2)

\psarc[linewidth=0.02,dimen=outer](4.5,3){.5}{180}{360}
\psarc[linewidth=0.02,dimen=outer](6,3){.5}{180}{360}

\rput(7.5,2){\huge $\Rightarrow$}

\psellipse[linewidth=0.02,dimen=outer](10,3)(.5,.2)
\psellipse[linewidth=0.02,dimen=outer](10,1)(.5,.2)

\psline[linewidth=0.02](9.5,3)(9.5,1)
\psline[linewidth=0.02](10.5,3)(10.5,1)

\pscurve[linewidth=0.02,dimen=outer](9.75,2.4)(9.9,2.2)(10.1,2.2)(10.25,2.4)
\pscurve[linewidth=0.02,dimen=outer](9.83,2.34)(9.9,2.4)(10.1,2.4)(10.17,2.34)

\rput(11.7,2){\huge $=_A$}

\psellipse[linewidth=0.02,dimen=outer](13,3.2)(.5,.2)
\psellipse[linewidth=0.02,dimen=outer](13,.8)(.5,.2)

\psarc[linewidth=0.02,dimen=outer](13,3.2){.5}{180}{360}
\psarc[linewidth=0.02,dimen=outer](13,.8){.5}{0}{180}

\end{pspicture}
}
\end{center}

\begin{prop}[MAXIMALITY]\label{max}
If for $K\neq L$, we have $K=_A L$, where $\dim(A)>1$, then
for some $k_1\geq\ldots\geq k_n\geq 0$ and
$l_1\geq\ldots\geq l_m\geq 0$ such that
$(k_1,\ldots,k_n)\neq(l_1,\ldots,l_m)$, we have
\begin{equation}\label{eq0}
\bigotimes_{i=1}^n E_{0,k_i,0}=_A \bigotimes_{j=1}^m E_{0,l_j,0}.
\end{equation}
\end{prop}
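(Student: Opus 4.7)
My plan is to prove this by induction on $n+m$ for $K, L: \mathbf{n} \to \mathbf{m}$. In the base case $n=m=0$, the classification of closed 2-manifolds expresses each of $K, L$ as a tensor product $\bigotimes_i E_{0,k_i,0}$ with weakly decreasing genera, so $K \neq L$ directly yields different genus sequences.

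For the inductive step with $n+m \geq 1$, the first move is FILLING HOLES: cap a chosen boundary circle to produce $K', L'$ with $K' =_A L'$ and strictly fewer boundary circles. The key subclaim is that in most configurations the capping can be chosen so that $K' \neq L'$ as cobordisms---this follows by a case split on the invariant distinguishing $K$ from $L$ (the partition $\rho$, some component's genus, or the closed-component multiset): a genus or closed-component discrepancy is preserved by capping any circle, while a partition discrepancy witnessed by two boundary circles $x, y$ is preserved by capping any third circle, which is automatic when $n+m \geq 3$. Whenever this holds, the induction hypothesis closes the argument.

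The main obstacle is the dead-end case, where the discrepancy is trapped between the only two boundary circles and each individual capping collapses it---typified by $K = E_{0,0,0} \otimes E_{1,0,1}$ versus $L = E_{0,0,1} \otimes E_{1,0,0}$ in $\mathbf{1} \to \mathbf{1}$, where either cap produces a sphere tensored with a disk. Here I would first cap all boundaries except two discrepancy witnesses $x, y$ (via FILLING HOLES), and then apply STRETCHING 2 (or, after a bending step via cup/cap cobordisms to bring $x, y$ onto the same side, STRETCHING 1) to convert the resulting $\mathbf{2} \to \mathbf{0}$ equation into one between $\mathbf{1} \to \mathbf{1}$ cobordisms in which the partition discrepancy becomes a genus difference between open components. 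An Euler-characteristic computation shows that merging two components of genera $g_1, g_2$ via $E_{2,0,0}$ yields a component of genus $g_1+g_2$, whereas adding a handle to a single component of genus $g$ yields $g+1$, and comparing these multisets produces distinct stretched cobordisms; a final round of FILLING HOLES on the surviving $\mj_1$ cylinder then reduces to a nontrivial closed equation of the required form. The hypothesis $\dim(A) > 1$ enters precisely to exclude the residual configurations in which every such reduction collapses even though $K \neq L$: in those cases the constraint $F_A(K) = F_A(L)$ forces $A$ to factor onto a one-dimensional quotient, which is incompatible with $\dim(A) > 1$.
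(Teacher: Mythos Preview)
Your three-way case split (partition, open genus, closed components) and the use of the filling and stretching lemmas parallel the paper's argument, but the dead-end case has a genuine gap. You miscompute the effect of Lemma~\ref{S2}: for $K,L\colon\mathbf{2}\to\mathbf{0}$, the cobordism $(K\otimes\mj_1)\circ(\mj_1\otimes E_{2,0,0})$ merely bends the second input around to become the output; it does \emph{not} glue the two inputs of $K$ together through a tube. If $L$ has its two inputs in separate components of genera $g_1,g_2$, the stretched version still has two open components of those same genera, not a single merged component of genus $g_1+g_2$. The partition discrepancy is therefore carried over unchanged, and your ``final round of FILLING HOLES'' on the resulting $\mathbf{1}\to\mathbf{1}$ cobordism is topologically the same as capping both inputs of the original $\mathbf{2}\to\mathbf{0}$ cobordism---exactly the move you already flagged as collapsing the difference. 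In your own prototype $K=E_{1,0,1}\otimes E_{0,0,0}$ versus $L=E_{0,0,1}\otimes E_{1,0,0}$, every such genus-$0$ closure produces $E_{0,0,0}\otimes E_{0,0,0}$ on both sides.

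Your invocation of $\dim(A)>1$ does not repair this. In the paper that hypothesis is used only at the very start, to force $K$ and $L$ to share the same source and target; it has no bearing on whether later reductions collapse, and there is no mechanism by which the residual equalities ``force $A$ to factor onto a one-dimensional quotient''. The idea you are missing is to close up with \emph{high-genus} caps rather than disks. After arriving at a $\mathbf{1}\to\mathbf{1}$ equation of the shape $E_{1,p,1}\otimes(\bigotimes_i E_{0,k_i,0})=_A E_{1,q,0}\otimes E_{0,r,1}\otimes(\bigotimes_j E_{0,l_j,0})$, the paper composes both sides with $E_{0,a,1}\circ\underline{\hspace{1em}}\circ E_{1,a,0}$ for some $a>\max\{k_1,l_1\}$. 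The connected side then closes to one new component of genus $2a+p$, while the disconnected side closes to two new components of genera $a+q$ and $a+r$; since all of these exceed every pre-existing $k_i,l_j$, the resulting closed genus multisets are forced to differ. In the prototype this yields $(2a,0)$ versus $(a,a)$, distinct for any $a\geq 1$.
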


\begin{proof}
Since $\dim(A)>1$, the cobordisms $K$ and $L$ must have the same
source and target. Also, $K\neq L$ entails that either $\rho_K\neq \rho_L$, or $\rho_K=\rho_L$ and there is $(i,k)$ such that
$(g^i_k)_K\neq (g^i_k)_L$, or $\rho_K=\rho_L$ and for every $(i,k)$,
$(g^i_k)_K= (g^i_k)_L$ and $K$ and $L$ differ in their closed components.

We start with the last and simplest case. If $\rho_K=\rho_L$ and for every $(i,k)$ we have
$(g^i_k)_K= (g^i_k)_L$, then by applying Lemma~\ref{FH} for all
the boundary components, we arrive at the equality of the form (\ref{eq0}).

If $\rho_K=\rho_L$ and there is $(i,k)$ such that
$(g^i_k)_K\neq (g^i_k)_L$, then by applying Lemma~\ref{FH} for all
the boundary components except the one corresponding to $(i,k)$,
and then by applying Lemma~\ref{S1}, we arrive at the equality of
the form
\begin{equation}\label{eq2}
E_{1,p,1}\otimes (\bigotimes_{i=1}^n E_{0,k_i,0})=_A
E_{1,q,1}\otimes (\bigotimes_{j=1}^m E_{0,l_j,0}),
\end{equation}
for some $n,m,p,q\geq 0$ such that $p\neq q$, and $k_1\geq\ldots\geq k_n\geq 0$,
$l_1\geq\ldots\geq l_m\geq 0$.

If $\rho_K\neq \rho_L$ and $(i,k)\rho_K(j,l)$, while not
$(i,k)\rho_L(j,l)$, then by applying Lemma~\ref{FH} for all the
boundary components except those corresponding to $(i,k)$ and
$(j,l)$ we arrive either directly at the equality of the form
\begin{equation}\label{eq1}
E_{1,p,1}\otimes (\bigotimes_{i=1}^n E_{0,k_i,0}) =_A
E_{1,q,0}\otimes
E_{0,r,1}\otimes (\bigotimes_{j=1}^m E_{0,l_j,0}),
\end{equation}
for some $n,m,p,q,r\geq 0$ and $k_1\geq\ldots\geq k_n\geq 0$,
$l_1\geq\ldots\geq l_m\geq 0$, or this equality is obtained by a further application of
Lemma~\ref{S2}.

For $a>\max\{k_1,l_1\}$, put the both sides of the equalities (\ref{eq2}) and (\ref{eq1}) in the context $E_{0,a,1}\circ\mbox{\underline{\hspace{1em}}}\circ E_{1,a,0}$ in order to obtain the equality of the form (\ref{eq0}).
\end{proof}

\section{Frobenius algebras $\mathbb{QZ}_5$ and $Z(\mathbb{QS}_3)$}

For all the examples below, when we fix a basis $\langle
\beta_1,\ldots,\beta_n\rangle$ of a vector space $V$, then we
assume that the tensor product $V\otimes V$ has the fixed basis
\[
\langle
\beta_1\otimes\beta_1,\beta_1\otimes\beta_2,\ldots,\beta_2\otimes\beta_1,\ldots,\beta_n\otimes\beta_n\rangle,
\]
and we represent the linear transformations by matrices with
respect to these bases.

For $\mathbb{Z}_5$ being the cyclic group of order 5, with the
generator $a$, let $\mathbb{QZ}_5$ be the group algebra and let
$\langle e,a,a^2,a^3,a^4\rangle$ be its basis. The multiplication
$\mu\colon \mathbb{QZ}_5\otimes \mathbb{QZ}_5\str \mathbb{QZ}_5$ is
represented by the $5\times 25$ matrix $M$ {\scriptsize\[ \left[
\begin{array}{ccccccccccccccccccccccccc}
1 &0 &0 &0 &0 &0 &0 &0 &0 &1 &0 &0 &0 &1 &0 &0 &0 &1 &0 &0 &0 &1
&0 &0 &0
\\
0 &1 &0 &0 &0 &1 &0 &0 &0 &0 &0 &0 &0 &0 &1 &0 &0 &0 &1 &0 &0 &0
&1 &0 &0
\\
0 &0 &1 &0 &0& 0 &1 &0 &0 &0 &1 &0 &0 &0 &0 &0 &0 &0 &0 &1 &0 &0
&0 &1 &0
\\
0 &0 &0 &1 &0& 0 &0 &1 &0 &0& 0 &1 &0 &0 &0 &1 &0 &0 &0 &0 &0 &0
&0 &0 &1
\\
0 &0 &0 &0 &1& 0 &0 &0 &1 &0& 0 &0 &1 &0 &0& 0 &1 &0 &0 &0 &1 &0
&0 &0 &0
\end{array}
\right]
\]}
while the unit $\eta\colon \mathbb{Q}\str\mathbb{QZ}_5$ is represented
by the $5\times 1$ matrix:
\[ \left[
\begin{array}{c}
1
\\
0
\\
0
\\
0
\\
0
\end{array}
\right]
\]
The comultiplication $\delta\colon \mathbb{QZ}_5\str
\mathbb{QZ}_5\otimes \mathbb{QZ}_5$ is represented by the
$25\times 5$ matrix ${1\over 5}M^T$, and the counit
$\varepsilon\colon \mathbb{QZ}_5\str \mathbb{Q}$ is represented by the
$1\times 5$ matrix
\[ \left[
\begin{array}{ccccc}
5 & 0 & 0 & 0 & 0
\end{array}
\right].
\]

The structure $(\mathbb{QZ}_5,\mu,\eta,\delta,\varepsilon)$ is a
commutative Frobenius algebra and it is \emph{special} in the
sense that for every $k$
\[
E_{1,k,1}=_{\mathbb{QZ}_5} E_{1,0,1}.
\]
Note that $(E_{0,k,0})_{\mathbb{QZ}_5}$ is represented by the $1\times 1$ matrix, i.e.\ the rational number 5.

Let $Z(\mathbb{QS}_3)$ be the center of the group algebra
$\mathbb{QS}_3$, where $\mathbb{S}_3$ is the symmetric group of degree~3. Denote the three conjugacy classes of
$\mathbb{S}_3$ by $C_1=\{e\}$, $C_2=\{(12),(13),(23)\}$
and $C_3=\{(123),(132)\}$. By \cite[Proposition~12.22]{JL01} one can fix
\[
\langle e, (12)+(13)+(23), (123)+(132)\rangle
\]
as the basis of $Z(\mathbb{QS}_3)$.

The multiplication $\mu\colon Z(\mathbb{QS}_3)\otimes Z(\mathbb{QS}_3)\str Z(\mathbb{QS}_3)$ is represented by the $3\times 9$ matrix
\[ \left[
\begin{array}{ccccccccc}
1 & 0 & 0 & 0 & 3 & 0 & 0 & 0 & 2
\\
0 & 1 & 0 & 1 & 0 & 2 & 0 & 2 & 0
\\
0 & 0 & 1 & 0 & 3 & 0 & 1 & 0 & 1
\end{array}
\right],
\]
the unit $\eta\colon \mathbb{Q}\str Z(\mathbb{QS}_3)$ is represented by the $3\times 1$ matrix
\[ \left[
\begin{array}{c}
1
\\
0
\\
0
\end{array}
\right],
\]
while the Frobenius form, i.e.\ the counit $\varepsilon\colon  Z(\mathbb{QS}_3)\str \mathbb{Q}$ is represented by the $1\times 3$ matrix
\[ \left[
\begin{array}{ccc}
1 & 0 & 0
\end{array}
\right].
\]

Since the Frobenius pairing $\beta$ is equal to $\varepsilon\circ\mu$ it is represented by the matrix
\[ \left[
\begin{array}{ccccccccc}
1 & 0 & 0 & 0 & 3 & 0 & 0 & 0 & 2
\end{array}
\right],
\]
hence, the corresponding copairing $\gamma$ is represented by the transpose of the matrix
\[ \left[
\begin{array}{ccccccccc}
1 & 0 & 0 & 0 & \frac{1}{3} & 0 & 0 & 0 & \frac{1}{2}
\end{array}
\right].
\]
The comultiplication in a Frobenius algebra is given by the composition \[
(\mj\otimes\mu)\circ(\gamma\otimes\mj)=(\mu\otimes\mj)\circ(\mj\otimes\gamma),
\]
and the coassociativity of the comultiplication is a consequence of the associativity of the multiplication. Therefore, the comultiplication $\delta\colon  Z(\mathbb{QS}_3)\str  Z(\mathbb{QS}_3)\otimes  Z(\mathbb{QS}_3)$ is represented by the $9\times 3$ matrix, which is the transpose of
\[ \left[
\begin{array}{ccccccccc}
1 & 0 & 0 & 0 & \frac{1}{3} & 0 & 0 & 0 & \frac{1}{2}
\\
0 & 1 & 0 & 1 & 0 & 1 & 0 & 1 & 0
\\
0 & 0 & 1 & 0 & \frac{2}{3} & 0 & 1 & 0 & \frac{1}{2}
\end{array}
\right].
\]

It is easy to check that for the commutative Frobenius algebra $(Z(\mathbb{QS}_3),\mu,\eta,\delta,\varepsilon)$ we have that $(E_{1,1,1})_{Z(\mathbb{QS}_3)}$, which is equal to $\mu\circ\delta$, is represented by the matrix
\[ \left[
\begin{array}{ccc}
3 & 0 & 3
\\
0 & 6 & 0
\\
\frac{3}{2} & 0 & \frac{9}{2}
\end{array}
\right].
\]
Hence, $(E_{1,k,1})_{Z(\mathbb{QS}_3)}$ is represented by the $k$th power of this matrix, i.e.\ by the matrix
\[ \left(\frac{3}{2}\right)^{k-1}\left[
\begin{array}{ccc}
2^{2k-1}+1 & 0 & 2^{2k}-1
\\
0 & 3\cdot 2^{2k-1} & 0
\\
2^{2k-1}-\frac{1}{2} & 0 & 2^{2k}+\frac{1}{2}
\end{array}
\right].
\]
Eventually,  $(E_{0,k,0})_{Z(\mathbb{QS}_3)}=\varepsilon\circ (E_{1,k,1})_{Z(\mathbb{QS}_3)}\circ\eta$ is represented by the
rational number
\[
\left(\frac{3}{2}\right)^{k-1}\!(2^{2k-1}+1).
\]

\section{Faithfulness}

In this section we denote the tensor product $\mathbb{QZ}_5\otimes Z(\mathbb{QS}_3)$ by $\mathbb{A}$. The algebra $\mathbb{A}$ is equipped with the commutative Frobenius structure as the tensor product of two such algebras (cf.\ \cite[Section~2.4]{K03}). Note that $(E_{0,k,0})_{\mathbb{A}}$ is represented by the rational number
\[
5\cdot\left(\frac{3}{2}\right)^{k-1}\!(2^{2k-1}+1).
\]

The following theorem, known as \emph{Zsigmondy's Theorem for sums} \cite{Zsi} (see also \cite[P1.7]{R94} and \cite{R97}), and the subsequent lemma are crucial for the proof of the faithfulness of the 2TQFT corresponding to $\mathbb{A}$.

\begin{thm}
For positive integers $a$, $b$ and $n$ such that $a$, $b$ are coprime, $a>b$ and $(n,a,b)\neq(3,2,1)$, there is a prime number $p$ such that $p$ divides $a^n + b^n$ and for every $k<n$, $p$ does not divide $a^k + b^k$.
\end{thm}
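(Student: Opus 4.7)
The plan is to deduce this from the classical Zsigmondy theorem for differences, which asserts that for coprime positive integers $a>b$ and $n\geq 1$, there is a prime $p$ such that $p\mid a^n-b^n$ while $p\nmid a^k-b^k$ for every $1\leq k<n$, with the three familiar exceptions: $n=1$ with $a-b=1$; $n=2$ with $a+b$ a power of $2$; and $(n,a,b)=(6,2,1)$. The whole task is to translate this statement about primitive prime divisors of $a^n-b^n$ into the corresponding statement about primitive prime divisors of $a^n+b^n$.

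The bridge is the factorization
\[
a^{2n}-b^{2n}=(a^n-b^n)(a^n+b^n),
\]
together with the observation that any prime dividing $a^k+b^k$ automatically divides $a^{2k}-b^{2k}$. Given a primitive prime divisor $p$ of $a^{2n}-b^{2n}$, meaning $p\mid a^{2n}-b^{2n}$ but $p\nmid a^j-b^j$ for $1\leq j<2n$, the first factor $a^n-b^n$ is not divisible by $p$, so $p\mid a^n+b^n$. Moreover, if one had $p\mid a^k+b^k$ for some $1\leq k<n$, then $p\mid a^{2k}-b^{2k}$ with $2k<2n$, contradicting the primitivity of $p$. Hence such a $p$ is precisely the prime required by the statement.

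It therefore remains only to guarantee that $a^{2n}-b^{2n}$ admits a primitive prime divisor. Applying the classical theorem with $2n$ in place of $n$, the possible exceptions are $2n=2$ with $a+b$ a power of $2$ (only relevant when $n=1$) and $(2n,a,b)=(6,2,1)$, which is exactly $(n,a,b)=(3,2,1)$ and is excluded in the hypothesis. The remaining case $n=1$ is immediate: no positive integer $k<1$ exists, so the non-divisibility clause is vacuous, and any prime divisor of $a+b\geq 3$ serves. The reduction is entirely mechanical, and no serious obstacle arises; the only point that must be verified with care is that the single excluded triple on the sum side matches exactly the surviving exception on the difference side after the substitution $n\mapsto 2n$, which it does.
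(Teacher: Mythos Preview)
The paper does not prove this theorem; it is quoted as a known result with references to Zsigmondy's original paper and to Ribenboim and Roitman. Your argument, deriving the ``sum'' version from the classical ``difference'' version of Zsigmondy's theorem via the factorization $a^{2n}-b^{2n}=(a^n-b^n)(a^n+b^n)$, is correct and is in fact the standard way this form is obtained. The bookkeeping of exceptions is handled properly: passing from $n$ to $2n$ kills the $n=1$ exception (since $2n\geq 2$), the case $n=1$ on the sum side is disposed of directly because $a+b\geq 3$ and the non-divisibility clause is vacuous, and the surviving exception $(2n,a,b)=(6,2,1)$ is exactly the excluded triple $(n,a,b)=(3,2,1)$.
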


\begin{lem}\label{l4}
If for $k_1\geq\ldots\geq k_n\geq 0$ and $l_1\geq\ldots\geq l_m\geq 0$
\[
\prod_{i=1}^n\left(5\cdot \left(\frac{3}{2}\right)^{k_i-1}\!(2^{2k_i-1}+1)\right)=
\prod_{j=1}^m\left(5\cdot \left(\frac{3}{2}\right)^{l_j-1}\!(2^{2l_j-1}+1)\right),
\]
then $n=m$ and $(k_1,\ldots,k_n)=(l_1,\ldots,l_m)$.
\end{lem}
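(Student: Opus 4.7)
The plan is strong induction on $N = \max\{k_1,\ldots,k_n,l_1,\ldots,l_m\}$, using Zsigmondy's Theorem to distinguish the two sides by $p$-adic valuation at a suitable large prime. Set $f(k) = 5\cdot(3/2)^{k-1}(2^{2k-1}+1)$, so the hypothesis reads $\prod_{i=1}^n f(k_i) = \prod_{j=1}^m f(l_j)$. A preliminary observation, to be used throughout, is that $5 \nmid 2^{2k-1}+1$ for every $k \geq 1$: the order of $2$ modulo $5$ is $4$, so $2^{2k-1} \pmod{5}$ cycles through $\{2,3\}$ and never equals $4 \equiv -1$. Consequently, for every prime $p \notin \{2,3,5\}$ one has $v_p(f(k)) = v_p(2^{2k-1}+1)$ when $k \geq 1$, and $v_p(f(0)) = 0$.

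The base case $N \leq 2$ is handled directly: writing $a_i$ and $b_i$ for the multiplicities of $i \in \{0,1,2\}$ in the two sequences, each side factors as $5^{a_0+a_1+a_2}\cdot 3^{a_1+3a_2}\cdot 2^{-a_2}$, using $f(0)=5$, $f(1)=15$, $f(2)=135/2$. Comparing the valuations at $2$, $3$, and $5$ forces $a_2=b_2$, then $a_1=b_1$, and finally $a_0=b_0$. For the inductive step $N \geq 3$, Zsigmondy's Theorem applied with $a=2$, $b=1$, $n=2N-1$ (the exceptional triple $(3,2,1)$ is excluded since $N \geq 3$) yields a prime $p$ with $p \mid 2^{2N-1}+1$ and $p \nmid 2^j+1$ for all $j < 2N-1$. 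Since $3 = 2+1$ and $2^{2N-1}+1$ is odd, $p \neq 2,3$; and $p \neq 5$ by the preliminary observation. Therefore for $1 \leq k < N$ we have $v_p(f(k)) = v_p(2^{2k-1}+1) = 0$, so only the $f(N)$-factors contribute to the $p$-adic valuation of either side. Denoting by $a_N$, $b_N$ the multiplicities of $N$ on the two sides, the hypothesis forces $a_N\, v_p(2^{2N-1}+1) = b_N\, v_p(2^{2N-1}+1)$, and since $v_p(2^{2N-1}+1) > 0$ we conclude $a_N = b_N$. Cancelling the $a_N = b_N$ copies of $f(N)$ from both sides reduces to a smaller instance of the same equation, to which the inductive hypothesis applies.

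The main obstacle is precisely the Zsigmondy exception at $N=2$: the integer $2^3+1 = 9 = 3^2$ carries no prime outside $\{3\}$, so the inductive argument cannot start at $N=2$. Fortunately, in the regime $N \leq 2$ the product involves only the primes $2$, $3$, $5$, and balancing these three valuations pins down all three multiplicities, providing a clean base case that meshes with the Zsigmondy-driven inductive step for $N \geq 3$.
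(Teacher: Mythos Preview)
Your proof is correct and follows essentially the same approach as the paper: both use that $5\nmid 2^{2k-1}+1$, both treat the Zsigmondy exception at exponent $3$ separately, and both finish by applying Zsigmondy's Theorem for sums to isolate the largest exponent. The only difference is organizational---you package the argument as a strong induction on $N=\max\{k_i,l_j\}$ with a direct base case $N\le 2$, whereas the paper first reads off $n=m$ and $\sum_i k_i-p=\sum_j l_j-q$ from the $5$- and $2$-adic valuations, cancels common factors, and then invokes Zsigmondy once for a contradiction.
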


\begin{proof}
Let $p$ and $q$ be such that $k_p,l_q>0$ and $k_{p+1}=0=l_{q+1}$ (if there are any). Then the above equality reads
\[
5^n\cdot\frac{3^{(\sum_{i=1}^p k_i)-p}}{2^{(\sum_{i=1}^p k_i)-p}}\prod_{i=1}^p(2^{2k_i-1}+1)= 5^m\cdot\frac{3^{(\sum_{j=1}^q l_j)-q}}{2^{(\sum_{j=1}^q l_j)-q}}\prod_{j=1}^q(2^{2l_j-1}+1).
\]
Since the last digit in $2^{2k-1}+1$ is either 3 or 9, such a factor is not divisible by 5, and we may conclude that $n=m$. Since all the factors but $2^{(\sum_{i=1}^p k_i)-p}$ and $2^{(\sum_{j=1}^q l_j)-q}$ are odd, we may conclude that $(\sum_{i=1}^p k_i)-p=(\sum_{j=1}^q l_j)-q$, and that
\[
\prod_{i=1}^p(2^{2k_i-1}+1)=\prod_{j=1}^q(2^{2l_j-1}+1).
\]

If $(k_1,\ldots,k_p)\neq(l_1,\ldots,l_q)$, then, after
cancelation, we may assume that every $k_i$ is different from
every $l_j$. Assume also that $k_1>l_1$. It is not possible that
$k_1=2$, since then $l_1=\ldots=l_q=1$ and $(\sum_{j=1}^q
l_j)-q=0<(\sum_{i=1}^p k_i)-p$. Hence, $k_1\geq 3$ and $2k_1-1\geq 5$.
By applying Zsigmondy's Theorem for sums, there would be a prime that
divides $2^{2k_1-1}+1$ and for every $1\leq j\leq q$ it does not
divide $2^{2l_j-1}+1$, which contradicts the above equality.
\end{proof}

\begin{thm}\label{t1}
The \emph{2TQFT} $F_\mathbb{A}$ is faithful and injective on objects.
\end{thm}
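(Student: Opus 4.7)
The plan is to combine the Maximality Proposition with Lemma~\ref{l4}. First, injectivity on objects is immediate: since $\dim(\mathbb{A})=\dim(\mathbb{QZ}_5)\cdot\dim(Z(\mathbb{QS}_3))=5\cdot 3=15>1$, the vector spaces $F_\mathbb{A}(\mathbf{n})=\mathbb{A}^{\otimes n}$ have pairwise distinct dimensions $15^n$, so $F_\mathbb{A}(\mathbf{n})=F_\mathbb{A}(\mathbf{m})$ forces $n=m$.

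For faithfulness I would argue by contraposition. Suppose $K\neq L$ but $K=_\mathbb{A} L$. Since $\dim(\mathbb{A})>1$, Proposition~\ref{max} applies and furnishes sequences $k_1\geq\ldots\geq k_n\geq 0$ and $l_1\geq\ldots\geq l_m\geq 0$ with $(k_1,\ldots,k_n)\neq(l_1,\ldots,l_m)$ such that
\[
\bigotimes_{i=1}^n E_{0,k_i,0}=_{\mathbb{A}}\bigotimes_{j=1}^m E_{0,l_j,0}.
\]
Each $E_{0,k,0}$ is a closed cobordism, so $F_\mathbb{A}(E_{0,k,0})$ is a scalar, namely the rational number $(E_{0,k,0})_\mathbb{A}=5\cdot(3/2)^{k-1}(2^{2k-1}+1)$ computed in Section~3. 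Applying the monoidal functor $F_\mathbb{A}$ to the equality above and using that the tensor product of $1\times 1$ matrices is multiplication, I obtain
\[
\prod_{i=1}^n\left(5\cdot\left(\tfrac{3}{2}\right)^{k_i-1}(2^{2k_i-1}+1)\right)=\prod_{j=1}^m\left(5\cdot\left(\tfrac{3}{2}\right)^{l_j-1}(2^{2l_j-1}+1)\right).
\]

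Lemma~\ref{l4} then forces $n=m$ and $(k_1,\ldots,k_n)=(l_1,\ldots,l_m)$, contradicting Proposition~\ref{max}. Hence no such $K,L$ exist, and $F_\mathbb{A}$ is faithful. The substantive content of the proof has already been concentrated in the Maximality Proposition (which reduces arbitrary distinct cobordisms to disjoint unions of closed genus-$k$ surfaces) and in Lemma~\ref{l4} (which rules out spurious numerical coincidences via Zsigmondy); the final step is simply to connect these two reductions through the functor $F_\mathbb{A}$, so no further obstacle is expected.
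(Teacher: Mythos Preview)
Your argument is correct and follows essentially the same route as the paper: use $\dim(\mathbb{A})>1$ for injectivity on objects, then combine Proposition~\ref{max} with Lemma~\ref{l4} (via the scalar value $(E_{0,k,0})_\mathbb{A}$) to obtain faithfulness by contraposition. The only quibble is that the formula $5\cdot(3/2)^{k-1}(2^{2k-1}+1)$ for $(E_{0,k,0})_\mathbb{A}$ is recorded at the start of Section~4 rather than in Section~3.
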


\begin{proof}
Since $\dim(\mathbb{A})>1$, the functor $F_\mathbb{A}$ is injective on objects. By Lemma~\ref{l4}, for every $k_1\geq \ldots\geq k_n\geq 0$ and $l_1\geq \ldots\geq l_m\geq 0$ such that $(k_1,\ldots,k_n)\neq(l_1,\ldots,l_m)$ we have
\[
\bigotimes_{i=1}^n E_{0,k_i,0}\neq_\mathbb{A} \bigotimes_{j=1}^m E_{0,l_j,0},
\]
and it remains for Proposition~\ref{max} to be applied.
\end{proof}

\begin{center}\textmd{\textbf{Acknowledgements} }
\end{center}
\medskip
We would like to thank the anonymous referees for suggestions, which helped to improve the paper, and for a comment how to simplify the formulation of Proposition~\ref{max} and the proof of Theorem~\ref{t1}.
We also thank Djordje Barali\' c for some very useful suggestions. This work was supported by projects 174026 and 174032 of the Ministry of Education, Science, and Technological Development of the Republic of Serbia. The first author was supported by DFG-Grant MU 4110/1-1.


\begin{thebibliography}{99}

\bibitem{A96} {\sc L.\ Abrams}, {\it Two-dimensional topological quantum field theories
and Frobenius algebras}, \textbf{\textit{Journal of Knot Theory
and its Ramifications}}, vol.\ 5 (1996) pp.\ 569-587

\bibitem{A88} {\sc M.\ Atiyah}, {\it Topological quantum field theories}, \textbf{\textit{Publications math\' ematiques de l'IH\' ES}}, vol.\ 68 (1989) pp.\ 175-186

\bibitem{BPT} {\sc Dj.\ Barali\' c}, {\sc Z.\ Petri\' c} and {\sc S.\ Telebakovi\' c},
{\it Spheres as Frobenius objects}, \textbf{\textit{Theory and Applications of Categories}}, vol.\ 33 (2018), pp.\ 691-726

\bibitem{B37} {\sc R.\ Brauer}, {\it On algebras which are connected with semisimple
continuous groups}, \textbf{\textit{Annals of Mathematics}}, vol.\
38 (1937) pp.\ 857-872

\bibitem{DP12} {\sc K.\ Do\v sen} and {\sc Z.\ Petri\' c}, {\it Symmetric self-adjunctions and matrices},
\textbf{\textit{Algebra Colloquium}}, vol.\ 19 (Spec 1) (2012)
pp.\ 1051-1082

\bibitem{F13} {\sc L.\ Funar}, {\it Torus bundles not distinguished by TQFT invariants},
\textbf{\textit{Geometry and Topology}}, vol.\ 17 (2013)
pp.\ 2289-2344

\bibitem{JL01} {\sc G.\ James} and {\sc M.\ Liebeck}, \textbf{\textit{Representations and Characters of Groups, 2nd Edition}}, Cambridge University Press,
Cambridge, 2001

\bibitem{J14} {\sc A.\ Juh\' asz}, {\it Defining and classifying TQFTs via surgery}, \textbf{\textit{Quantum Topology}}, vol.\ 9 (2018) pp.\ 229-321

\bibitem{K03} {\sc J.\ Kock}, \textbf{\textit{Frobenius Algebras and 2D
Topological Quantum Field Theories}}, Cambridge University Press,
Cambridge, 2003

\bibitem{L09} {\sc J.\ Lurie}, {\it On the Classification of Topological Field Theories}, \textbf{\textit{Current Developments in Mathematics}}, 2008 (D.\ Jerison et al., editors), International Press, Somerville, MA, 2009, pp.\ 129-280

\bibitem{ML71} {\sc S.\ Mac Lane}, \textbf{\textit{Categories for the Working Mathematician}}, Springer, Berlin, 1971 (expanded second edition, 1998)

\bibitem{Q95} {\sc F.\ Quinn}, {\it Lectures on axiomatic topological quantum field
theory}, \textbf{\textit{Geometry and Quantum Field Theory}}
(D.S.\ Freed and K.K.\ Uhlenbeck, editors), American Mathematical
Society, Providence, 1995, pp.\ 323-453

\bibitem{R94} {\sc P.\ Ribenboim}, \textbf{\textit{Catalan's Conjecture}}, Academic Press, Boston, 1994

\bibitem{R97} {\sc M.\ Roitman}, {\it On Zsigmondy primes}, \textbf{\textit{Proceedings of the AMS}}, vol.\ 125 (1997) pp.\ 1913-1919

\bibitem{T17} {\sc S.\ Telebakovi\' c}, {\it On the Faithfulness of 1-dimensional Topological
Quantum Field Theories}, preprint, available at ArXiv (2017)

\bibitem{T94} {\sc V.G.\ Turaev}, \textbf{\textit{Quantum invariants of knots and 3-manifolds}}, de Gruyter Studies in Mathematics, vol.\ 18, Walter de Gruyter
\& Co., Berlin, 1994

\bibitem{TV92} {\sc V.G.\ Turaev} and {\sc O.Y.\ Viro}, {\it State sum invariants of 3-manifolds and quantum 6j-symbols}, \textbf{\textit{Topology}}, vol.\ 31 (1992) pp.\ 865-902

\bibitem{Zsi} {\sc K.\ Zsigmondy}, {\it Zur Theorie der Potenzreste}, \textbf{\textit{Journal Monatshefte f\" ur Mathematik}}, vol.\ 3 (1892) pp.\ 265-284



\end{thebibliography}
\end{document}